\newtheorem{thm}{Theorem}
\newtheorem{theo}{Theorem}[section]
\newtheorem{lemma}[theo]{Lemma}
\begin{document}
\date{}

\title{
Divisible subdivisions
}

\author{Noga Alon
\thanks{Department of Mathematics, Princeton University,
Princeton, NJ 08544, USA and
Schools of Mathematics and
Computer Science, Tel Aviv University, Tel Aviv 6997801,
Israel.
Email: {\tt nogaa@tau.ac.il}.
Research supported in part by
NSF grant DMS-1855464, BSF grant 2018267
and the Simons Foundation.}
\and
Michael Krivelevich
\thanks{
School of Mathematical Sciences,
Tel Aviv University, Tel Aviv 6997801, Israel.
Email: {\tt krivelev@tauex.tau.ac.il}.  Research supported in part
by ISF grant 1261/17 and by USA-Israel BSF grant 2018267.}
}

\maketitle
\begin{abstract}
We prove that for every
graph $H$ of maximum degree at most $3$ and for every
positive integer $q$ there is a finite $f=f(H,q)$ such that every
$K_f$-minor contains a subdivision of $H$ in which every edge
is replaced by a path whose length is divisible by $q$.
For the case of cycles we show that for $f=O(q \log q)$
every $K_f$-minor
contains a cycle of length divisible by $q$, and observe that
this settles a recent problem of Friedman and the second author
about cycles in (weakly) expanding graphs.
\end{abstract}

\section{Introduction}

There are several known results asserting that any graph with a
sufficiently large minimum (or average)
degree contains a cycle of prescribed length
modulo a given parameter. An early result of this form appears in
\cite{Bo77}: for every odd $k$ there exists a $c(k)$ so that every graph
with minimum degree at least $c(k)$ contains a cycle of length
$\ell$ modulo $k$ for every integer $\ell$. A similar result holds
for every even $k$ and every even $\ell$ (but of course not for
even $k$ and odd $\ell$ as shown by dense bipartite graphs.)
Thomassen proved in \cite{Th83} that for non-bipartite $2$-connected
graphs a result as above exists also for even $k$ and odd $\ell$.

For graphs with large chromatic number stronger conclusions hold.
Another result established in \cite{Th83} addresses this case:
For any two positive integers $m$ and $k$ there exists a number
$c(m,k)$ such that the following holds. For every assignment of
two natural numbers $k(e) \leq k$ and $d(e)$ for each edge $e$ of
$K_m$, any graph of chromatic number at least $c(m,k)$ contains
a subdivision of $K_m$ in which each edge $e$ corresponds to a path
of length $d(e)$ modulo $k(e)$.

A more recent result with a similar flavor is proved in \cite{LM20}:
for every $k$ there is a $c(k)$ so that every graph with average degree
at least $c(k)$ contains a subdivision of $K_k$ in which every edge is
subdivided the same number of times.

A common feature of these results and related ones is that they apply
only to graphs with a rather large average degree. In particular,
if the average degree is just a bit above $2$, then none of these
results holds, and indeed there are simple examples showing no such
result can hold if we do not make any additional assumptions.
In this note we prove a result applicable to very sparse graphs.
The sufficient condition we give is based on complete minors; since
there exist minors of arbitrarily large complete graphs with average degree
arbitrarily close to $2$ in every subgraph,
this imposes essentially the weakest
possible condition in terms of graph density.

\begin{thm}
\label{th1}
For every graph $H$ of maximum degree at most $3$, and for every
positive integer $q$ there is a finite $f=f(H,q)$ such that every
$K_f$-minor $G$ contains a subdivision of $H$ in which every edge
is replaced by a path whose length is divisible by $q$.
\end{thm}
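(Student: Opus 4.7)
The plan is to construct the subdivision edge-by-edge, giving each edge of $H$ a private piece of the $K_f$-minor. Let $V_1,\ldots,V_f$ be the branch sets of the minor, with a chosen bridging edge for every pair. I reserve one branch set $B_v$ per vertex $v\in V(H)$ and a private block $\mathcal{B}_e$ of $N=N(q)$ further branch sets per edge $e\in E(H)$, so that $f=|V(H)|+|E(H)|\,N(q)$ will suffice. In each region $B_v$, since $v$ has at most three incident edges in $H$, I pick the branch vertex $v^*\in B_v$ as a Steiner point in a spanning tree of $B_v$ relative to the (at most three) bridging-edge endpoints that lead into the blocks $\mathcal{B}_e$ for $e\ni v$; this makes the in-region paths from $v^*$ to these exits internally disjoint, and so the fans at each branch vertex of $H$ fit together. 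With this setup the whole theorem reduces to the following \emph{Key Lemma}: for every $q$ there exists $N=N(q)$ such that, in every graph with a $K_N$-minor, for any two vertices $x,y$ in distinct branch sets of the minor, there is a simple $xy$-path inside the union of the branch sets whose length is divisible by $q$. Applying this lemma separately in each block $\mathcal{B}_e$ (with the appropriate exit vertices in $B_u$ and $B_v$ as $x,y$) and stitching the resulting paths together through the Steiner fans gives the desired $H$-subdivision.

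To prove the Key Lemma I would start from a short canonical $xy$-path going through just one intermediate branch set of the $K_N$-minor, and then repeatedly alter this path by detouring it through so-far-unused branch sets. Each detour, obtained by exiting one branch set through one bridging edge and re-entering the structure through another, is internally disjoint from the rest of the path and shifts its total length by an amount controlled by the spanning-tree distances inside the new branch set. My aim is to show that, for $N$ sufficiently large as a function of $q$, the family of achievable length-shifts modulo $q$ generates the whole group $\mathbb{Z}/q\mathbb{Z}$; combined with short cycles of various residues modulo $q$ that the pairwise connectivity of the $K_N$-minor forces (in the spirit of the cycle result mentioned in the abstract), this allows us to adjust the length of the initial path so as to land on residue $0$.

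The main obstacle is precisely this Key Lemma. Once the bridging edges are fixed, the segment lengths inside each branch set are determined by the choice of entry and exit vertices, so the flexibility modulo $q$ must be squeezed entirely out of the combinatorial routing choices between branch sets together with whatever cycles the pairwise connections in the minor produce. Quantifying how many branch sets have to be invested per edge of $H$ to force residue $0$ to be achievable---and hence the value of $N(q)$, and therefore $f(H,q)$---is where the delicate part of the proof lies.
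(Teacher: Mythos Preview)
Your Key Lemma is false as stated. Take $q=2$ and let $G$ be obtained from $K_N$ by subdividing every edge once. This graph is bipartite with parts $A=\{1,\dots,N\}$ and $B=\{m_{ij}:i<j\}$, and it is a $K_N$-minor with branch sets $V_i=\{i\}\cup\{m_{ij}:j>i\}$ (each a star, adjacent pairs joined by the edge $m_{ij}j$). Choosing $x=1\in V_1$ and $y=m_{23}\in V_2$ puts $x\in A$ and $y\in B$, so \emph{every} simple $xy$-path in $G$ has odd length, no matter how large $N$ is. The same phenomenon occurs for general $q$: once $x$ and $y$ are fixed, the set of $xy$-path lengths modulo $q$ can be confined to a single nonzero coset, so your detour-and-shift mechanism cannot ``generate the whole group $\mathbb{Z}/q\mathbb{Z}$''. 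Even setting this aside, there is a second gap: the path you stitch together for an edge $uv$ of $H$ runs $u^*\to x\to\cdots\to y\to v^*$, and the Steiner-fan tails $u^*x$ and $yv^*$ have lengths $\ell_u,\ell_v$ that you never control; a middle segment of length $\equiv 0$ leaves the total at $\ell_u+\ell_v\pmod q$, not $0$. Strengthening the Key Lemma to hit the prescribed residue $-(\ell_u+\ell_v)$ would be needed, and the bipartite example shows that this stronger form is hopeless.

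The paper's proof avoids ever trying to force an individual connection to a target residue. Its key lemma is about a single edge-weighted tree: among many leaves one can find $k$ of them and a residue $a$ so that any three of these leaves admit a Steiner point from which all three tree-paths have weight exactly $a\pmod q$. Applying this inside each supernode's spanning tree, and then pigeonholing over the resulting values of $a$, produces many supernodes whose ``tail residue'' is the \emph{same} value $a$. Next, rather than building $H$ directly, the proof takes a $q$-colour Ramsey graph $\Gamma$ for the $(q{-}1)$-subdivision of $H$, colours each edge of $\Gamma$ by the residue $b$ of the connecting path between the corresponding supernodes, and extracts a monochromatic $(q{-}1)$-subdivision of $H$. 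Each edge of $H$ is then realised by a path that makes $q$ hops of residue $b$, passes through $q-1$ intermediate supernodes each contributing $2a$, and has two end-tails of residue $a$; the total is
\[
2a+(q-1)\cdot 2a+q\cdot b=(2a+b)\,q\equiv 0\pmod q.
\]
So nothing is forced to be $0$ individually; the residues are merely made \emph{uniform}, and the factor $q$ coming from the $(q{-}1)$-subdivision kills them. That coordination step---choosing which supernodes and which leaves to use based on their residues, before committing to branch vertices---is exactly what your fix-first, adjust-later plan lacks.
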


\bigskip

\noindent
{\bf Remark:} For every $f$ and $q$ there is a $K_f$-minor $G$ with
maximum degree $3$, in which any path between two vertices
of degree $3$ is of length divisible by $q$; obviously such $G$
does not contain a subdivision of any graph $H$ with maximum degree
$\Delta(H)>3$. Hence, the assumption that $H$ has maximum degree
at most $3$
is needed. Similarly the conclusion about paths of length $0$ modulo $q$
cannot be replaced by any other residue modulo $q$, unlike the results
in \cite{Bo77} or \cite{Th83} mentioned above.

We make essentially no attempt to optimize the value of $f=f(H,q)$, and the
problem of determining its asymptotic behavior remains open.

It is well known that graphs without small separators contain large
complete minors (the line of research establishing these results
started in \cite{AST90} and
culminated with \cite{KR10}). Our main result is thus applicable
to such graphs. Also, non-existence of sublinear separators is
essentially equivalent to weak expansion (see \cite{Kri19} for
a discussion), hence Theorem \ref{th1} can be applied to the class
of (weakly) expanding graphs.

In the very
special case of a cycle of length divisible by $q$ we
get a nearly tight bound, proving the following.
\begin{thm}
\label{th2}
For every positive integer $q$ there is a $g=g(q)=O(q \log q)$ such that
every $K_g$-minor $G$ contains a cycle of length divisible by $q$.
Moreover, if $q$ is a prime then $g(q) < 4q$.
\end{thm}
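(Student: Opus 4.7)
Given a $K_g$-minor with branch sets $V_1,\dots,V_g$, I would fix a spanning tree $T_i$ of each $G[V_i]$ and, for each pair $i\neq j$, a crossing edge $e_{ij}\in E(G)$ with endpoints $u_{ij}\in V_i$ and $u_{ji}\in V_j$. Every cycle in the auxiliary complete graph on $\{1,\dots,g\}$ lifts to a cycle in $G$ whose length equals the number of crossing edges used, plus, for each visited $V_i$, the length of the unique $T_i$-path between the two anchor endpoints coming from the two neighbors of $i$ in the auxiliary cycle. The task becomes to choose an auxiliary cycle whose lift has total length divisible by $q$.

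For the prime case $q=p$ with $g=4p-1$, I would designate two ``hub'' branch sets $V_1,V_2$ and use the remaining $g-2=4p-3$ branch sets as ``ears''. Each ear $V_i$ ($i\geq 3$) furnishes a $V_1$-to-$V_2$ path through $V_i$ of length $\ell_i:=2+d_{T_i}(u_{i1},u_{i2})$, and any two ears $V_i,V_j$ close up, using tree-paths inside the hubs, into a cycle of length
\[
L_{ij}=\ell_i+\ell_j+d_{T_1}(u_{1i},u_{1j})+d_{T_2}(u_{2i},u_{2j}).
\]
The aim is a pair $i\neq j$ with $L_{ij}\equiv 0\pmod p$. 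The hub tree-distances are pair-dependent, so I would first prune the ears to a large subfamily on which those contributions decouple additively: take a centroid $r_1$ of the anchor set $\{u_{1i}\}$ in $T_1$, keep ears whose $T_1$-anchors lie in pairwise distinct subtrees of $T_1-r_1$ (so that $d_{T_1}(u_{1i},u_{1j})=d_{T_1}(u_{1i},r_1)+d_{T_1}(u_{1j},r_1)$), then repeat the procedure for $T_2$. On the surviving subfamily $I$ one has $L_{ij}\equiv a_i+a_j\pmod p$ for certain single-index residues $a_i\in\mathbb{Z}_p$, and a Cauchy--Davenport / Erd\H{o}s--Ginzburg--Ziv argument on $\{a_i:i\in I\}$ produces two distinct indices with $a_i+a_j\equiv 0\pmod p$, provided $|I|$ is large enough after the two prunings; this is precisely what the factor $4$ (rather than $2$) in $4p$ is designed to afford.

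For general $q$ with $g=O(q\log q)$, I would iterate the prime-power strengthening of the above argument over the prime factors of $q$: at each step, I restrict to a sub-family of ears whose combined contribution already lies in the correct residue class modulo the next prime-power factor, incurring a multiplicative loss proportional to that factor; telescoping over the $O(\log q)$ prime factors (with multiplicity) of $q$ gives the $O(q\log q)$ bound. The principal technical difficulty throughout is the additivity reduction in the hub trees: when $T_1,T_2$ have complicated branching and irregularly placed anchors, guaranteeing that a sufficiently large subfamily survives the centroid/Helly-style extraction is delicate, and it is this step that dictates the constant $4$ in the prime bound and the logarithmic overhead in the general one.
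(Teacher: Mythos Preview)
Your approach is quite different from the paper's, and as sketched it has two genuine gaps.

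\textbf{Gap 1: the centroid pruning can lose almost everything.} You claim that after choosing a centroid $r_1$ in $T_1$ and keeping only those ears whose $T_1$-anchors lie in pairwise distinct components of $T_1 - r_1$, enough ears survive (and likewise in $T_2$) that a constant factor such as $4$ suffices. But if $T_1$ happens to be a path, then $T_1 - r_1$ has at most two components for \emph{any} choice of $r_1$, so at most two ears survive the very first pruning, no matter how many you started with. More generally, the largest set of anchors pairwise separated by a single vertex is bounded by the maximum degree of the Steiner tree of the anchor set in $T_1$, which can be $2$. No constant multiplicative budget absorbs this; the step you flag as ``delicate'' is in fact an obstruction.

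\textbf{Gap 2: even granting additivity, the two-term zero-sum is not guaranteed.} Suppose you did obtain a family $I$ with $L_{ij}\equiv a_i+a_j\pmod p$. You then want $i\neq j$ with $a_i+a_j\equiv 0$. But if every $a_i$ equals a fixed residue $a$ with $2a\not\equiv 0\pmod p$ (say $a=1$ and $p$ odd), then $a_i+a_j\equiv 2a\not\equiv 0$ for every pair, however large $|I|$ is. Cauchy--Davenport and Erd\H{o}s--Ginzburg--Ziv do not help here: EGZ would give a zero $p$-term subsum, but your two-hub construction only produces cycles from exactly two ears.

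The paper sidesteps both issues by a different and cleaner reduction. It pairs the $g=2N$ branch sets into $N$ pairs $(X_i^-,X_i^+)$, fixes the connecting edge $x_i^-x_i^+$, and assigns to each \emph{ordered} pair $(i,j)$ the weight $b_i+w'(i,j)$, where $b_i$ is the length of $x_i^-x_i^+$ and $w'(i,j)$ is the length of the unique tree path in $X_i^+\cup X_j^-$ from $x_i^+$ to $x_j^-$. Any directed cycle of total weight $0\pmod q$ in this auxiliary complete digraph on $[N]$ lifts directly to a cycle of length $0\pmod q$ in $G$: each $X_i^+$ and each $X_i^-$ is traversed exactly once, along a single tree path, so no additivity reduction is ever needed. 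The problem thus becomes a clean zero-sum statement for arc-weighted complete digraphs, which the paper proves with $N=2q-1$ for prime $q$ (via a Cauchy--Davenport path-building argument that realises every residue as a path weight) and with $N=\lceil 2q\ln q\rceil$ for general $q$ (via a short union-bound argument on random vertex labels in $Z_q$).
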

This, together with the fact that expanders contain large clique minors
(see \cite{Kri19} for a background),
settles a problem raised recently in \cite{FK20}.

For illustrative purposes let us mention that, confirming a conjecture
of Thomassen \cite{Th83}, Gao, Huo, Liu and Ma proved \cite{GHLM21}
that requiring minimum degree $\delta(G)\ge q+1$ guarantees cycles of
all even lengths modulo $q$, including of course cycles of length
divisible by $q$; requiring $\delta(G)\ge q$ is necessary to guarantee
a cycle of length divisible by $q$, at least for odd $q$, as shown by
the complete bipartite graph $K_{q-1,n}$ for $n\ge q-1$. In contrast,
our result allows to argue about existence of cycles of length
divisible by $q$ in much sparser graphs.

The proofs are described in the next two sections. The final section
contains some concluding remarks and open problems.

\section{Subdivisions}
In this section we prove Theorem \ref{th1}. We start
by proving a key lemma. Here and later by the weight
of a path $P$ in an edge-weighted graph $G$ we mean the sum
of the weights of all edges in $P$.
 \begin{lemma}
\label{le1}
There is a function $f_1(k,q)$ satisfying the following condition.
Let $k,q\ge 2$ be integers. Let $T$ be a tree with edges labeled
by the elements of $Z_q$, and let $L$ be a set of specified
leaves of $T$ of cardinality $|L|=f_1(k,q)$. Then there are a
subset $L_0\subset L$ of $k$ leaves and a residue $a\in Z_q$
such that for every three leaves $x_1,x_2, x_3\in L_0$ there
is a  vertex $v\in V(T)$ with all paths from $v$ to $x_i$ in $T$
being disjoint outside of $v$ and having all weight $a$ modulo $q$.
\end{lemma}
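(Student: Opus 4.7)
The plan is to apply Ramsey's theorem for $3$-uniform hypergraphs to the triples of leaves in $L$, coloring each triple by a $Z_q^3$-vector that records the three path-weights from the triple's ``branching point'' in $T$.

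For any three distinct leaves $x,y,z\in L$ the subtree of $T$ spanned by $\{x,y,z\}$ is a tree with exactly three leaves, and a simple degree count shows it contains a unique vertex of degree $3$; call this vertex the \emph{median} $v=v(x,y,z)\in V(T)$. The three paths from $v$ to $x$, $y$, $z$ in $T$ are then internally vertex-disjoint, so $v$ is the only candidate for the vertex required by the conclusion of the lemma on the triple $\{x,y,z\}$. Fix an arbitrary linear order on $L$; to each triple $x<y<z$ in $L$ assign the color $(w(v,x),w(v,y),w(v,z))\in Z_q^3$, where $w(\cdot,\cdot)$ denotes path-weight modulo $q$ and $v=v(x,y,z)$. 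This is a coloring of $\binom{L}{3}$ with at most $q^3$ colors, so by the hypergraph Ramsey theorem there is a finite $f_1(k,q)$ such that $|L|\ge f_1(k,q)$ guarantees a monochromatic subset $L_0\subseteq L$ of size $\max(k,4)$, with common color $(a_1,a_2,a_3)$.

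The heart of the proof is the claim that $a_1=a_2=a_3$. To establish it, I would pick any four leaves $x_1<x_2<x_3<x_4$ in $L_0$ and examine the Steiner subtree $S\subseteq T$ spanning them. A standard degree count for trees with four leaves shows that $S$ is of one of two types: (A) a single branching vertex $v$ from which the four paths to $x_1,\ldots,x_4$ are internally disjoint, or (B) two branching vertices $u,v$ joined by a $uv$-path in $T$, with two of the four leaves attached to $u$ and the other two to $v$. In case (A) all four triples $\{x_i,x_j,x_k\}\subset\{x_1,\ldots,x_4\}$ share the median $v$, and comparing the position in which each leaf appears across different triples (for example $x_2$ occupies the second slot in $\{x_1,x_2,x_3\}$ and the first in $\{x_2,x_3,x_4\}$, forcing $a_2=a_1$) immediately yields $a_1=a_2=a_3$. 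In case (B) the median of a triple is whichever of $u,v$ carries two of its leaves; writing $w(u,x)=w(u,v)+w(v,x)$ for each leaf $x$ attached to $v$ and comparing the color equations for the four triples, one first deduces that the ``bridge'' weight $w(u,v)$ vanishes in $Z_q$ and then that $a_1=a_2=a_3$. The three possible pairings of the four leaves yield three symmetric subcases, all handled in the same manner.

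Once the claim is proved, set $a:=a_1=a_2=a_3$ and restrict $L_0$ to any $k$ of its elements. For every triple $\{x,y,z\}\subseteq L_0$ the median $v=v(x,y,z)$ then satisfies $w(v,x)=w(v,y)=w(v,z)=a$, while the three paths in $T$ from $v$ to $x,y,z$ are internally disjoint by definition of the median. The main obstacle is the case (B) verification: although entirely elementary, it is the only step that genuinely uses the edge weights, and one must check for each of the three pairings that all three coordinates $a_i$ are forced to coincide rather than merely some weaker identity among them. The resulting $f_1(k,q)$ is a hypergraph Ramsey number and is certainly not best possible, but the statement only requires finiteness.
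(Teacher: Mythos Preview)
Your approach is correct and genuinely different from the paper's. The paper argues by an elementary case split: either the minimal subtree containing $L$ has a vertex of high degree (at least $(k-1)q+2$), in which case pigeonhole on the weights of paths from that vertex gives $L_0$ directly; or the tree has bounded degree, forcing a long root-to-leaf path with many branching vertices, and two applications of pigeonhole (first on path weights along that long path, then on weights of the side branches) produce $L_0$. This yields the explicit bound $f_1(k,q)=((k-1)q+1)^{(k-1)q^2+1}$.

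Your Ramsey argument is cleaner and more conceptual: colouring triples by their median-weight vectors and then showing, via the Steiner tree of any four leaves, that monochromaticity forces $a_1=a_2=a_3$. I checked your case (B) in all three pairings and it goes through exactly as you outline---comparing first coordinates across the four triples kills the bridge weight, and then comparing the slot each leaf occupies in different triples collapses the $a_i$'s. The price is quantitative: the 3-uniform Ramsey number with $q^3$ colours is doubly exponential in $kq^3$, far worse than the paper's singly exponential bound. Since the lemma only asks for finiteness, this does not matter here, but the paper explicitly records its bound because $f_1(k,q)$ feeds into the estimate for $f(H,q)$ in Theorem~\ref{th1}. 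On the other hand, your argument would adapt with no new ideas to the analogous statement for $r$-tuples of leaves rather than triples.
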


\begin{proof}
We set with foresight
$$
f_1=f_1(k,q)=((k-1)q+1)^{(k-1)q^2+1}\,.
$$

Assume that $T,L$ are as given in the lemma. Observe that if the
lemma's conclusion holds for a subtree $T'\subseteq T$
containing $L$ then it holds for $T$. Hence we can assume
that $T$ is a minimal by inclusion tree containing $L$.

For convenience root $T$ at an arbitrary vertex $r\in V(T)$
with $d_T(r)\ge 3$. For a vertex $v\in V(T)$ denote by
$T_v$ the subtree of $T$ rooted at $v$ (with respect to $r$).

Consider first the case where there is a vertex $v\in V(T)$
with $d_T(v)\ge (k-1)q+2$. By the minimality of $T$, for each
child $u$ of $v$ the subtree $T_u$ contains a leaf $\ell(u)\in L$,
the paths from $v$ to all such $\ell(u)$ are disjoint outside of $v$.
By the pigeonhole principle there is a subset $U_0$ of the children
of $v$ in $T$ of cardinality $|U_0|=\lceil\frac{d_T(v)-1}{q}\rceil\ge k$
such that all paths from $v$ to $\ell(u)$, $u\in U_0$, are of
the same weight $a$ modulo $q$. The set $L_0=\{\ell(u): u\in U_0\}$
fulfills then the requirement of the lemma.

Now we treat the complementary case $\Delta(T)\le (k-1)q+1$.
Let $t$ be the maximal number of vertices of degree at least three
on a path from $r$ to a leaf $x\in L$. Since every $x\in L$
is uniquely determined by the sequence of edges leaving the vertices
of degree at least three on the unique path from $r$ to $x$ in $T$,
and the number of such sequences is obviously at most $(\Delta(T))
^t$, we obtain: $|L|\le (\Delta(T))^{t}$, implying $t\ge (k-1)q^2+1$.
Let $P$ be a path from $r$ to a leaf of $T$ with $t$ vertices of
degree at least three in $T$ along it, and let $U_1\subset V(P)$
be these vertices, $|U_1|=t\ge (k-1)q^2+1$. By the pigeonhole
principle, there is a subset $U_2\subset U_1$ of cardinality
$|U_2|=\lceil\frac{|U_1|}{q}\rceil \ge (k-1)q+1$ such that all
subpaths of $P$ from $r$ to $u\in U_2$ have the same weight modulo $q$.
This implies that for every $u_1\ne u_2\in U_2$ the subpath of $P$
between $u_1$ and $u_2$ has weight 0 modulo $q$. By the minimality
of $T$, every vertex $u\in U_2$ contains a leaf $\ell(u)\in L$
in its subtree $T_u$, where all these leaves are distinct.
Applying the
pigeonhole principle again, we derive the existence of a subset
$U_3\subset U_2$ of cardinality $|U_3|=k$ such that every path
between $u\in U_3$ and the corresponding leaf $\ell(u)\in T_u$
has the same weight $a$ modulo $q$. Set $L_0=\{\ell(u): u\in U_3\}$.
We claim that $L_0$ meets the requirement of the lemma.
Indeed, let $u_1,u_2,u_3$ be distinct vertices in $U_3$ ordered in
the order of their appearance along $P$. Then the paths from $u_2$
to $\ell(u_i)$, $i=1,2,3$, are all disjoint outside of $u_2$ and
have total weight $a$ modulo $q$ (the part along $P$ has weight $0$
modulo $q$, and the appended part from $u_i$ to $\ell(u_i)$ has
weight $a(\bmod\ q)$).
\end{proof}

We can now prove Theorem \ref{th1}, making no attempt to
optimize the estimate for $f$. Let $\Gamma$ be a Ramsey
graph in $q$ colors for the $(q-1)$-subdivision of $H$.
Assume that the vertex set of $\Gamma$ is $[N]$,
denote $k=2|E(\Gamma)|$. Let $M=f_1(k,q)$,
with $f_1(k,q)$ from Lemma \ref{le1}. Finally, set:
$$
f=M+(N-1)q+1\,.
$$
Assume $G$ is a minor of $K_f$ with supernodes
$X_1,\ldots,X_{(N-1)q+1},Y_1,\ldots, Y_M$. For a pair of
supernodes $X_i,Y_j$, if $e=(x,y)$ is an edge connecting
$X_i$ to $Y_j$ then split $e$ by a vertex $z$, assign
weights $w((x,z))=0$, $w((z,y))=1$, and append $(x,z)$ to $X_i$.
We assign weight $1$ to all remaining edges of $G$.

Let $T_i$ be a spanning tree of $X_i$ and let $L_i$ be a set
of $M=f_1(q,k)$ distinct leaves of $T_i$, each connected to a
different supernode $Y_j$. Apply Lemma \ref{le1} to $(T_i,L_i)$
to get a subset $L_i'$ of cardinality $|L_i'|=k$ and a
residue $a_i\in Z_q$ with the properties guaranteed by the
lemma. Invoking the pigeonhole principle with respect to
the multiset of residues $\{a_i\}$ we conclude that there
exists a subset $I\subset [(N-1)q+1]$ of cardinality $|I|=N$
with all residues $a_i$, $i\in I$, taking the same value $a$.
By renumbering if necessary we can assume $I=[N]$.

Now we go sequentially over all edges $e=(i_1,i_2)\in E(\Gamma)$
and connect the corresponding supernodes $X_{i_1},X_{i_2}$ as follows:
choose a previously unused supernode $Y_{j_1}$ having a neighbor
in $L_{i_1}'$, choose a distinct and previously unused
supernode $Y_{j_2}$ having a neighbor in $L_{i_2}'$, and then
connect $Y_{j_1}$ and $Y_{j_2}$. Concatenating, we obtain a
path $P_e$ in $G$ from a leaf $x_1\in L_{i_1}'$ to a
leaf $x_2\in L_{i_2}'$, where all these paths are vertex disjoint
for different edges $e=(i_1,i_2)\in E(\Gamma)$.

Define a coloring $c: E(\Gamma)\rightarrow Z_q$ as follows:
for $e=(i_1,i_2)\in E(\Gamma)$, its color $c(e)$ is equal to the
weight modulo $q$ of the path $P_e$ between $X_{i_1}$ and $X_{i_2}$.
By the choice of $\Gamma$, the so obtained coloring $c$
induces a monochromatic copy $H^*$ of the $(q-1)$-subdivision
of $H$, say in color $b\in Z_q$. Let $I_0\subset [N]$ be the subset
of supernodes corresponding to the vertices of $H$ in this subdivision.
By construction, for each edge $f=(i_1,i_2)\in E(H)$, $i_1,i_2\in I_0$,
the graph $G$ contains a path $Q_f$ from $X_{i_1}$ to $X_{i_2}$
passing through a sequence of $q-1$ intermediate supernodes $X_i$,
with the intermediate supernodes being distinct for distinct edges.
Each such path enters and leaves $X_i$ through vertices of $L_i'$;
by the definition of $L_i'$ and the choice of $a$, the entrance
and departure point in $X_i$ can be connected by a path of
length $2a$ modulo $q$. The weight of $Q_f$ between two consecutive
intermediate supernodes $X_i$, and also between $X_{i_1}$ and the
first intermediate supernode, and between the last intermediate
supernode and $X_{i_2}$ is $b$ modulo $q$. Finally, for each
supernode $X_i$, $i\in I_0$, the (at most three) paths $Q_f$
leaving $X_i$ all depart from the vertices of $L_i'$, hence we
can choose a vertex $v_i\in X_i$ connected to the departure
points by disjoint paths of weight $a$ each. Collecting all
weights, we conclude that for each edge $f=(i_1,i_2)\in E(H)$,
the path between $v_{i_1}$ and $v_{i_2}$ in $G$ has total weight:
$$
2a+(q-1)\cdot 2a+q\cdot b= (2a+b)q \equiv 0(\bmod\ q)\,.
$$
We have thus obtained the required subdivision of $H$,
completing the proof. \hfill $\Box$

\section{Cycles}

In this section we prove Theorem \ref{th2}.
In the lemma below and later, a complete digraph is a digraph in
which every pair of vertices is connected by one edge in each of
the two directions.

\begin{lemma}
\label{le3}
Let $q\ge 2$ be an integer, and let $\Gamma=(V,E)$ be a
complete digraph on $\lceil 2q\ln q\rceil$ vertices with
weights $w(e)\in Z_q$ on its edges. Then $\Gamma$ contains a
directed cycle $C$ of total weight divisible by $q$.
\end{lemma}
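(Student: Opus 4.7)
The plan is to combine a pigeonhole argument on partial sums along a long directed path with an iterative refinement. Fix an arbitrary vertex $v_0 \in V$ and, since $\Gamma$ is a complete digraph, take any directed Hamilton path $P = v_0 v_1 \ldots v_{n-1}$ in $\Gamma$. Define the partial sums $S_k = \sum_{i=1}^{k} w(v_{i-1}, v_i) \in Z_q$ for $k = 0, 1, \ldots, n-1$ (with $S_0 = 0$). These are $n = \lceil 2q\ln q\rceil$ residues in $Z_q$, so by pigeonhole some residue $c \in Z_q$ is attained on an index set $I \subseteq \{0, 1, \ldots, n-1\}$ with $|I| \geq \lceil n/q\rceil \geq 2\ln q$.

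For any pair $i < j$ in $I$, the sub-path $v_i v_{i+1} \ldots v_j$ of $P$ has weight $S_j - S_i \equiv 0 \pmod{q}$. Adjoining the back-edge $v_j \to v_i$ (which is present in $\Gamma$) yields a directed cycle of total weight $w(v_j, v_i) \pmod{q}$. If any of the $\binom{|I|}{2}$ candidate back-edges has weight $\equiv 0 \pmod{q}$, this cycle is the one sought.

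Otherwise, the weights of all these back-edges lie in $Z_q \setminus \{0\}$, which imposes extra structure on the induced complete sub-digraph on $V' = \{v_i : i \in I\}$. The plan is then to iterate: take a Hamilton path inside $V'$, compute its partial sums, and repeat the argument. Each round either produces a weight-$0$ cycle or tightens the constraint on admissible edge weights, and roughly $O(\log q)$ rounds should suffice, matching the $\log q$ factor in the $2q\log q$ bound.

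The main obstacle is quantifying the iteration: a naive recursion shrinks the vertex set by a factor of $q$ each round, which is far too fast to support $\log q$ rounds starting from only $\lceil 2q\ln q\rceil$ vertices. A likely cleaner route is probabilistic: choose a uniformly random cyclic ordering of $V$ and analyze the joint distribution of partial sums and of the corresponding closing back-edge weights. A coupon-collector/concentration estimate shows that with $n \geq 2q\ln q$ the partial sums hit every residue class of $Z_q$ many times, and an averaging argument over the back-edge weights produces, with positive probability, a pair $(i,j)$ for which the closing back-edge carries weight $\equiv 0 \pmod q$. The $\ln q$ factor then arises naturally from the coupon-collector expectation, and the final existence statement follows by the probabilistic method.
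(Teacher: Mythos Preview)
Your proposal has a genuine gap: the cycles you construct are too restrictive, and the vague probabilistic patch does not repair this. Concretely, your method only ever produces cycles of the form ``path segment of weight $0$ plus a single back-edge $v_j\to v_i$'', so every cycle you find has total weight exactly $w(v_j,v_i)$. Consider the instance where every edge of $\Gamma$ has weight $1$. Then along any Hamilton path the partial sums are $0,1,2,\ldots$, so $S_i=S_j$ forces $j-i\equiv 0\pmod q$; but the closing back-edge always has weight $1$, so every cycle your construction outputs has weight $1\not\equiv 0\pmod q$. This does not depend on the ordering, so randomizing the Hamilton path or the cyclic order changes nothing. Of course the lemma is true here (any directed cycle of length $q$ works), but your mechanism cannot find it. The same obstruction kills the iteration idea: after one pigeonhole step you are left with $|V'|\approx 2\ln q<q$ vertices, and in the all-$1$ example the partial sums along any Hamilton path in $V'$ are already distinct modulo $q$, so the second round finds no pair at all.

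The paper's proof is short and uses a different device. Assign each vertex $u$ an independent uniform label $c(u)\in Z_q$, and call an edge $uv$ \emph{good} if $c(v)=c(u)+w(uv)$. For each $u$, the probability that no outgoing edge is good is $(1-1/q)^{|V|-1}<1/|V|$, so by the union bound there is a labeling under which every vertex has a good outgoing edge. Following one good edge out of each vertex yields a functional digraph, which contains a directed cycle $u_1u_2\cdots u_\ell u_1$; telescoping gives
\[
\sum_{i=1}^{\ell} w(u_iu_{i+1})=\sum_{i=1}^{\ell}\bigl(c(u_{i+1})-c(u_i)\bigr)\equiv 0\pmod q.
\]
The crucial difference from your attempt is that this argument does not commit in advance to a cycle shape (path plus one back-edge); the cycle is discovered after the labeling, and the telescoping identity guarantees weight $0$ automatically.
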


\begin{proof} The proof borrows its main idea from the argument
in \cite{AL89}. Here though we are in a more favorable situation
dealing with the complete digraph and can thus allow ourselves to
employ a simpler probabilistic tool for the proof --
the union bound (instead of the Local Lemma used in \cite{AL89}).

Let $c: V\to Z_q$ be a random labeling of $V$ by the elements
of $Z_q$. For a vertex $u\in V$ denote by $A_u$ the event
``$u$ has an outneighbor $v$ satisfying $c(v)=c(u)+w(uv)$".
In order to estimate $Pr[\overline{A_u}]$ observe that by
conditioning on the label $c(u)$, the probability that none of
the outneighbors $v$ of $u$ satisfies $c(v)=c(u)+w(uv)$
is $(1-1/q)^{|V|-1}<1/|V|$. Hence by the union bound there is
a choice of $c$ for which all of the events $A_u$ hold.
Fix such a choice, and for every vertex $u\in V$ choose an
outgoing edge $(u,v)$ so that $c(v)=c(u)+w(uv)$.
In the subgraph of $\Gamma$ obtained this way every
outdegree is $1$ and hence it has a directed cycle
$C=(u_1, \ldots,u_\ell,u_1)$. Summing all weights along the edges
of $C$ and denoting $u_{\ell+1}=u_1$ we obtain:
$$
\sum_{i=1}^{\ell} w(u_iu_{i+1})=\sum_{i=1}^{\ell}(c(u_{i+1})-c(u_i))
\equiv 0(\bmod\ q),$$
as required.
\end{proof}

If $q$ is a prime number then the logarithmic term in the above lemma
can be omitted, thus giving an asymptotically optimal order of magnitude.
\begin{lemma}
\label{le31}
Let $q\ge 2$ be a prime, and let $\Gamma=(V,E)$ be a
complete digraph on $2q-1$ vertices with
weights $w(e)\in Z_q$ on its edges. Then $\Gamma$ contains a
directed cycle $C$ of total weight divisible by $q$.
\end{lemma}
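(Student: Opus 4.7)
My plan is to mirror the proof of Lemma \ref{le3}: find a labeling $c\colon V\to Z_q$ such that for every vertex $u$ there is some outneighbor $v$ with $c(v)\equiv c(u)+w(uv)\,(\bmod\ q)$, and then select one such ``good'' outgoing edge per vertex to form a functional sub-digraph whose directed cycles are all of weight divisible by $q$ by the same telescoping identity $\sum_{i}(c(u_{i+1})-c(u_{i}))\equiv 0\,(\bmod\ q)$ used already in Lemma \ref{le3}. The only new issue is establishing the existence of such a good labeling when $|V|=n=2q-1$ and $q$ is prime.

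The probabilistic union bound of Lemma \ref{le3} is insufficient here: the expected number of bad vertices is $(2q-1)(1-1/q)^{2q-2}\sim(2q-1)/e^{2}\gg 1$, so no union-bound argument over vertices will yield a good labeling. For prime $q$ I would replace it by an algebraic argument exploiting that $Z_q$ is a field. Associate to each vertex $u$ the polynomial $S_u(x)=\prod_{v\neq u}(x_v-x_u-w(uv))\in Z_q[x_1,\ldots,x_n]$, so that good labelings are precisely common zeros of the system $\{S_u\}_{u\in V}$. The direct Chevalley--Warning bound fails because the sum of degrees $n(n-1)$ greatly exceeds $n$, so some refinement is needed. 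A natural remedy is to work with a single polynomial of controlled effective degree, expressing the indicator of the good-labeling event via the Fermat identity $a^{q-1}=\mathbf{1}[a\neq 0]$ valid in $Z_q$, and computing the character sum
$$N\;\equiv\;\sum_{c\in Z_q^{n}}\prod_{u\in V}\bigl(1-S_u(c)^{q-1}\bigr)\,(\bmod\ q);$$
if $N\not\equiv 0\,(\bmod\ q)$ then in particular $N\geq 1$, so a good labeling exists. Expanding and invoking the identity $\sum_{a\in Z_q}a^{k}\equiv -1\,(\bmod\ q)$ precisely when $k\geq 1$ and $(q-1)\mid k$, the task reduces to isolating a monomial $\prod_i x_i^{\alpha_i}$ with each $\alpha_i$ a positive multiple of $q-1$ whose coefficient in $\prod_u(1-S_u^{q-1})$ is nonzero modulo $q$.

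The main obstacle is the delicate non-cancellation analysis of this coefficient. The natural candidate is the ``balanced'' top-degree monomial obtained by picking only variable terms (never the constant $-w(uv)$) in each factor of the expansion, whose coefficient admits a combinatorial interpretation as a signed count of $\{0,1\}$-assignments to the arcs of $K^{\ast}_{2q-1}$ subject to an Eulerian-type balance condition (in-degree equals out-degree at every vertex). Establishing that this signed count is not divisible by $q$ is where both the choice $|V|=2q-1$ and the primality of $q$ enter crucially; should this route turn out not to give non-vanishing directly, a variant of Chevalley--Warning applied after fixing one coordinate and reducing to a system in $n-1$ variables provides a fallback. In either case, once a good labeling $c$ is produced, the functional-sub-digraph construction of Lemma \ref{le3} immediately extracts the required $0$-mod-$q$ directed cycle.
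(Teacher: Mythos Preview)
Your proposal is not a proof but a plan, and its central step is left open: you explicitly call the non-vanishing of the relevant coefficient ``the main obstacle'' and then do not resolve it. Worse, the main line you propose cannot succeed as stated. The set of good labelings is invariant under the diagonal $Z_q$-action $c\mapsto c+(a,\ldots,a)$, since only differences $c(v)-c(u)$ enter the goodness condition; this action is free, so the number $N$ of good labelings is always divisible by $q$. Thus $N\equiv 0\pmod q$ holds for every weighting, and the character-sum computation you outline will always return zero --- it can never certify $N\ge 1$. (For $q=2$, $n=3$ with all weights $1$ one checks directly that $N=6$.)

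Your ``fallback'' of fixing one coordinate does break this symmetry, but you give no argument that the resulting count $N/q$ is nonzero modulo $q$, and the Chevalley--Warning degree condition is still nowhere close to being met: after fixing $x_{u_0}$ you have $n-1=2q-2$ variables but the $n$ polynomials $S_u$ still have total degree $n(n-1)=(2q-1)(2q-2)$. Nothing in your sketch indicates how to bridge this gap, and the signed-Eulerian coefficient you mention is neither identified precisely nor shown to be nonzero.

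By contrast, the paper's argument is short and constructive and uses no counting modulo $q$. After disposing of the case where some $2$-cycle already has weight $0$, it builds inductively a sequence $x_0,x_1,y_1,\ldots,x_k,y_k$ of distinct vertices together with a set $S\subset Z_q$ of $k+1$ residues realized as weights of $x_0$--$x_k$ paths, where each segment is either $x_{i-1}x_i$ or $x_{i-1}y_ix_i$. The induction step appends two new vertices and enlarges $S$ via the $|T|=2$ case of Cauchy--Davenport (this is exactly where primality is used). At $k=q-1$ all residues are hit, and closing with the edge $x_{q-1}x_0$ gives the desired cycle; the total number of vertices used is at most $2q-1$.
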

\begin{proof}
If there are two vertices $u,v$ so that $w(u,v)=-w(v,u)$ (modulo $q$) then
there is a directed cycle consisting of two edges
satisfying the requirement,
hence we may and will assume that there is no such pair of vertices. In
this case we proceed to prove that for every $k<q$ there
are distinct vertices $x_0,x_1,y_1,x_2,y_2, \ldots x_k,y_k$ in $\Gamma$
and a set $S$ of $k+1$ distinct residues modulo $q$ so that for every
$s \in S$ there is a directed path $P_s$ from $x_0$ to $x_k$ of total weight
$s$ modulo $q$.
Each path $P_s$ consists of $k$ subpaths $p_1,p_2, \ldots ,p_k$,
in this order, where each $p_i$ is either the single edge $x_{i-1}x_i$
or the two-edge path  $x_{i-1}y_ix_i$. This is proved by induction
on $k$. For $k=0$ the required path is the trivial path with no edges
and $S=\{0\}$. Assuming the result holds for $k<q-1$ we prove it for
$k+1$. Let $u,v$ be two vertices of $\Gamma$ that are not in the set
$\{x_0,x_1,x_2, \ldots ,x_{k},y_k\}$. If $w(x_k,u)=w(x_k,v)+w(v,u)$ and
$w(x_k,v)=w(x_k,u)+w(u,v)$ then $w(u,v)=-w(v,u)$ contradicting the
assumption. Thus at least one of these equalities  does not hold; by
renaming $u$ and $v$ if needed we may assume that $w(x_k,u) \neq
w(x_k,v)+w(v,u)$. Define $x_{k+1}=u$ and $y_{k+1}=v$. Let $T$
be the set of the two distinct residues $w(x_k,u)=w(x_k,x_{k+1})$ and
$w(x_k,v)+w(v,u)=w(x_k,y_{k+1})+w(y_{k+1},x_{k+1})$.
Clearly, for every residue $s\in S + T$, there is a path from $x_0$
to $x_{k+1}$ of total weight $s$ in $\Gamma$.
By the Cauchy-Davenport theorem (for the easy special case in which
one of the sets is of size $2$), $|S+T| \geq |S|+|T|-1 =k+1$
establishing the induction step. Taking $k=q-1$,  the result shows that
for every residue class $s$ modulo  $q$ there is a directed simple
path from $x_0$ to $x_{q-1}$ of weight $s$. Choosing $s=-w(x_{q-1},x_0)$
and adding the edge $x_{q-1}x_0$
gives the required cycle.
\end{proof}

We can now prove Theorem \ref{th2}. (In fact, our proof applies to the
general weighted setting.) If $q$ is not a prime define
$N= \lceil 2q \log q \rceil$ and $g=2N$, if it is a prime define
$N=2q-1$ and $g=2N$.
Given a $K_g$-minor on the
$2N$ supernodes $X_i^+$ and $X_i^-$ for $1 \leq i \leq N$, assume
without loss of generality
that the induced subgraph on each supernode is a tree
and that there is exactly one edge connecting each pair of supernodes.
For each $i$ let $b_i$ be the weight of the unique edge, $x_i^-x_i^+$
connecting $X_i^-$ and $X_i^+$ ($1$ if there are no weights).
For each $i \neq j$ let $w'(ij)$ be the total weight
modulo $q$ of the unique
path in the induced tree on the vertices
$X_i^+ \cup X_j^{-}$ from the vertex $x_i^+$ to the vertex
$x_j^-$. By the two lemmas above applied to the auxiliary complete
directed graph on the vertices $\{1,2, \ldots ,N\}$ with the
weights $w(ij)=b_i+w'(ij)$ there is a directed
cycle of total weight $0$ modulo $q$ in this auxiliary digraph.
This gives the
required cycle in the original graph.
\hfill $\Box$

\section{Concluding remarks}
\begin{itemize}
\item
There are several ways to improve the bound for $f(H,q)$ in the proof
of Theorem \ref{th1}. In particular, one can use the constructions
of Ramsey graphs for subdivisions given in
\cite{KRR19,DKN20} as a seed for our proof.  In addition, it is possible
to take $k=\Delta(\Gamma)$, where $\Gamma$ is the Ramsey graph used,
and $M=2|E(\Gamma)|+f_1(q,k)$. Then we first choose $N$
supernodes $X_i$ with the same value of $a_i$ as in the present proof,
fix a bijection between these $N$ nodes and the vertex set
of $\Gamma$, and then for each $i=1,...,N$, look at the leaves of
$X_i$ connected to previously unused supernodes $Y_j$
(altogether we use at most $2|E(\Gamma)|$ such supernodes $Y_j$,
two per each edge of $\Gamma$ throughout the proof).
There are at least $f_1(q,k)$ of them, this would be our set
$L_i$. Next apply to it Lemma \ref{le1} to get a set of cardinality
$k$ (in fact here the degree of vertex $i$ in $\Gamma$ suffices),
and then put aside the supernodes $Y_j$ to which the edges from this
$k$-set of leaves in $L_i$ lead. This gives some improvement,
but as is frequently the case with Ramsey-type results, the bound
obtained is still huge, surely far from being optimal. It may be
interesting to try to determine or to estimate
the asymptotic behavior of the best
possible bound for $f(H,q)$.
\item
An $\alpha$-expander is a graph on $n$ vertices in which
every set $X$ of
at most $n/2$ vertices has at least $\alpha|X|$ neighbors
outside $X$, see \cite{Kri19} for a general discussion.
It is easy to see that any such graph has no sublinear
separators and thus contains a $K_f$-minor for
$f \geq c(\alpha) \sqrt n$ by \cite{KR10}. Our results
thus apply to such graphs and
in particular imply that any such graph contains cycles of length
divisible by $q$
for any $q \leq \tilde{O}(n^{1/2})$.  This settles a question
posed explicitly in \cite{FK20}. See \cite{FK20} for further
results about cycle lengths in $\alpha$-expanders.
\item
There is a substantial amount of research on Ramsey-type problems
for structures labelled by elements of an abelian group. Questions
of this type are called zero-sum problems, see \cite{Ca96}
for a survey of the subject (until the mid 90s).
A typical problem in the subject
is to determine or estimate the smallest number $f$ so that any
complete graph with edges labelled by the elements of $Z_q$
contains a subgraph of a prescribed type in which the total weight
of the edges is $0$ modulo $q$. This problem for complete directed
graphs, where the desired subgraph is a directed cycle, is
addressed in Lemma \ref{le3} and Lemma \ref{le31}. It seems
plausible to believe that the first lemma is not tight and
that the  function $g(q)$ in Theorem \ref{th2}
is linear in $q$ for any integer $q$.
\end{itemize}

\section*{Acknowledgment}
The initial results in this paper were obtained when the
second author visited the Department of Mathematics of Princeton University.
He would like to thank the department for the hospitality.


\begin{thebibliography}{99}
\bibitem{AL89}
N. Alon and N. Linial, {\em Cycles of length $0$
modulo $k$ in directed graphs}, J. Combin. Th. Ser. B 47 (1989), 114--119.
\bibitem{AST90}
N. Alon, P. D. Seymour and R. Thomas, {\em A separator theorem
for non-planar graphs}, J. Amer. Math. Soc. 3 (1990), 801-808.
\bibitem{Bo77}
B. Bollob\'as, {\em Cycles modulo $k$},
Bull. London Math. Soc. 9 (1977), 97--98.
\bibitem{Ca96}
Y. Caro,
{\em Zero-sum problems --- a survey},
Discrete Math. 152 (1996),  93--113.
\bibitem{DKN20}
N. Dragani\'c, M. Krivelevich and R. Nenadov,
{\em The size Ramsey number of short subdivisions},
Random Struct. Alg., to appear.
\bibitem{FK20}
L. Friedman and M. Krivelevich,
{\em Cycle lengths in expanding graphs},
Combinatorica 41 (2021), 53--74.
\bibitem{GHLM21}
J. Gao, Q. Huo, C.-H. Liu and J. Ma,
{\em A unified proof of conjectures on cycle lengths in graphs},
 Int. Math. Res. Not., to appear.
\bibitem{KR10}
K. Kawarabayashi and B.  Reed,
{\em A separator theorem in minor-closed classes},
Proc. 51st Symp. Found. Comp. Sci. (FOCS'10), 2010, 153--162.
\bibitem{KRR19}
Y. Kohayakawa, T. Retter and V. R\"odl,
{\em The size Ramsey number of short subdivisions of bounded degree graphs},
Random Struct. Alg. 54 (2019), 304--339.
\bibitem{Kri19}
M. Krivelevich, {\em Expanders --- how to find them,
and what to find in them},
Surveys in Combinatorics, A. Lo et al., Eds.,
London Math. Soc. Lecture Notes 456, pp. 115--142,
2019.
\bibitem{LM20}
H. Liu and R. Montgomery,
{\em A solution to Erd\H{o}s and Hajnal's odd cycle problem},
arXiv: 2010.15802.
\bibitem{Th83}
C. Thomassen,
{\em Graph decomposition with applications to subdivisions
and path systems modulo $k$},
J. Graph Theory  7 (1983), 261--271.
\end{thebibliography}
\end{document}